\newtheorem{theorem}{Theorem}[section]
\newtheorem{lemma}[theorem]{Lemma}
\newtheorem{definition}[theorem]{Definition}
\newtheorem{example}[theorem]{Example}
\newtheorem{conjecture}[theorem]{Conjecture}
\newtheorem{remark}[theorem]{Remark}
\begin{document}
\date{ }
\title{Novel Constructions of Words with Strong Avoidance Properties and their Combinatorial Analysis}
\author[1]{Duaa Abdullah\thanks{Corresponding Author: abdulla.d@phystech.edu}}
\author[1]{Jasem Hamoud\thanks{khamud@phystech.edu}}
\affil[1]{Department of Discrete Mathematics, Moscow Institute of Physics and Technology}

\maketitle

\begin{abstract}
This paper begins with a comprehensive overview of combinatorics on words and symbolic dynamics, covering their historical origins, fundamental concepts, and interconnections. Building upon this foundation, we introduce novel mathematical constructions related to pattern avoidance in infinite words. Specifically, we define Strongly $(k, \delta)$-Free Words generated via cyclic shift morphisms and present a theorem establishing specific avoidance properties for these words, along with a detailed proof. Furthermore, we propose a conjecture regarding their factor complexity. These original results contribute to the theoretical understanding of word structures and their combinatorial properties, opening avenues for further research in discrete mathematics.

\noindent\textbf{MSC Classification 2020:}  68R15, 11B39, 05A05, 	68R15, 	37B10.

\noindent\textbf{Keywords:} Construction, Word, Combinatorics, Analysis, Symbolic Dynamics.
\end{abstract}

\section{Introduction} \label{sec1}
Combinatorics on words, a vibrant subfield of discrete mathematics, has emerged as a cornerstone of both theoretical inquiry and applied computational research. Rooted in the study of symbol sequences and their structural properties, this discipline intersects profoundly with dynamical systems, formal languages, and algorithmic complexity. Recent decades have witnessed significant advances in pattern avoidance, morphic word generation, and connections to symbolic dynamics—a framework for analyzing dynamical systems through discretized state spaces. Despite these developments, critical challenges persist in constructing infinite words with tailored avoidance properties while preserving computational tractability, particularly when integrating algebraic symmetries such as cyclic permutations. Bridging this gap requires novel methodologies that harmonize combinatorial rigor with dynamical systems theory—a synthesis central to this work.  

The foundational contributions of Axel Thue~\cite{Thue}on repetition-free words catalyzed modern combinatorics on words, while subsequent efforts by Lothaire~\cite{Lothaire}, Perrin, and Allouche expanded its theoretical and applied horizons. Yet, existing paradigms remain constrained by their reliance on static alphabets and identity-based repetitions, limiting their adaptability to systems where symbol transformations inherently govern structural evolution. For instance, while classical $k$-power avoidance is well-studied, generalizations incorporating permutational twists—essential for modeling cyclic symmetries in biological sequences, cryptographic systems, or shift-invariant dynamical systems—remain underexplored. This study addresses this limitation by introducing twisted repetitions, where avoidance constraints intertwine with cyclic permutations, and by analyzing infinite words generated via cyclic shift morphisms~\cite{Marcus, Allouche}.  

In this paper, we establish three principal contributions. First, we define strongly $(k, \delta)$-free words, a generalization of power-free words where repetitions are prohibited under iterated permutations $\delta$ of the alphabet. This framework subsumes classical avoidance (e.g., square-free words) while enabling nuanced control over symbol transformations. Second, we construct a family of infinite words through cyclic shift morphisms—2-uniform substitutions that iteratively append permuted symbols—and prove their avoidance properties. Specifically, Theorem \ref{thm:fictional_avoidance_restated} demonstrates that for an alphabet of size $N \geq 3$ and cyclic permutation $\sigma$, the generated word avoids strongly $(3, \delta)$-repetitions when $\delta = \sigma^j$ and $j \not\equiv 1 \pmod{N}$. Third, we conjecture the linear factor complexity of these words, aligning them with Sturmian-like sequences while broadening their applicability to symbolic dynamics. These results are rigorously validated through synchronization arguments, infinite descent, and structural analysis of morphic images.  

The implications of this work extend beyond combinatorics. By linking twisted repetitions to shift spaces in symbolic dynamics, we provide tools to classify dynamical complexity through forbidden patterns—a paradigm shift with potential applications in data compression, chaos theory, and biosequence analysis. Furthermore, our morphism-driven constructions offer algorithmic templates for generating sequences with guaranteed avoidance properties, relevant to coding theory and automata design.  

This paper is structured as follows: Section 1 synthesizes the history of combinatorics on words and symbolic dynamics, contextualizing their interplay. Section 2 introduces novel definitions and proves the central avoidance theorem, while Section 3 discusses broader implications and future directions, including unresolved conjectures and connections to shift space entropy. By unifying combinatorial and dynamical perspectives, this research not only advances the axiomatic foundations of word combinatorics but also opens interdisciplinary pathways for exploring complex systems through discrete mathematical lensess.
\subsection{History of Combinatorics on Words}~\label{subsec1}
\indent Combinatorics on words, a vigorous and relatively recent topic within discrete mathematics, digs into the study of sequences of symbols or words, and their fundamental features. Although its formalization as a distinct field is comparatively recent, its roots are deeply embedded in various mathematical and scientific inquiries that span centuries. Axel Thue's pioneering work at the turn of the twentieth century is frequently credited with the systematic study of words in the combinatorial sense.  Thue investigated square-free words, which are sequences without two adjacent identical blocks of symbols, marked a significant turning point and laid the foundational groundwork for the field.  As Roger Lyndon, a prominent figure in the area, noted, Thue’s contribution was not merely his theorems but his delineation of this new subject of study~\cite{Perrin}.\par 
The larger discipline of combinatorics, which serves as the intellectual foundation for combinatorics on words, has a long wider and more diversified history. Combinatorics, at its core, is concerned with the counting, organization, and attributes of finite or discrete structures.\par 
Basic combinatorial concepts and enumerative problems can be traced back to ancient civilizations. For instance, the Rhind Papyrus, dating back to the 16th century BC, contains problems with combinatorial characteristics, such as those involving geometric series, which bear resemblance to later problems example, Fibonacci’s work on compositions.In a similar way ancient Indian writings, such as the Sushruta Samhita, show ancient combinatorial computations, such as calculating the number of possible combinations of various flavours.\par 
Greek mathematicians and philosophers, including Chrysippus and Hipparchus, also grappled with complex enumerative problems, some of which were later connected to sophisticated combinatorial numbers, for example, the Schr\"{o}der–Hipparchus numbers. Archimedes, in his work on the Ostomachion, is thought to have considered configurations of tiling puzzles, another early instance of combinatorial thinking.\par 
During the Middle Ages, combinatorics was studied extensively, especially beyond Europe.  In the 9th century, Indian mathematicians such as Mahavıra developed precise formulas for permutations and combinations, based on knowledge that may have existed in India since the mid-6th century AD.\par 
Scholars such as Rabbi Abraham ibn Ezra (c. 1140) developed features such binary coefficient symmetry, which Levi ben Gerson constructed a closed formula for in 1321.The arithmetical triangle, famously known as Pascal’s triangle, which graphically displays relationships among binomial coefficients, appeared in mathematical treatises from the 10th century onwards.\par 
The Renaissance witnessed a development in interest in compositions, in addition to other mathematical and scientific fields. The contributions of luminaries such as Blaise Pascal, Isaac Newton, Jacob Bernoulli, and Leonhard Euler were instrumental in shaping the emerging field. Their work laid much of the groundwork for what would later become enumerative and algebraic combinatorics. In more modern times, the efforts of mathematicians like J.J. Sylvester in the late 19th century and Percy MacMahon in the early 20th century further solidified these foundations. The concurrent rise of graph theory, often spurred by problems like the four-color problem, also contributed significantly to the development of combinatorial methods.\par 
The formal emergence of combinatorics on words as a distinct area of study can be seen as a confluence of these historical streams with more focused investigations into the properties of sequences. The work of Thue, though initially overlooked for several decades, was eventually recognized for its profound implications. His 1906 and 1912 papers (e.g.,~\cite{Thue}), which explored repetitions in words and introduced concepts like square-free words, were motivated by a desire to develop logical sciences through speculation on challenging problems, with connections to number theory (Thue, 1912, as cited in~\cite{Perrin}). The concept of a 'word' as a finite or infinite sequence of symbols from an alphabet, and notions like 'factors' (subsequences) and 'length', became central to this new theory.\par
The significance of combinatorics on words was further highlighted by the publication of collective volumes under the pseudonym M. Lothaire. The first volume, appearing in 1983, was described by Roger Lyndon as the first comprehensive book dedicated to the combinatorics of words, acknowledging the subject’s ancient origins and its recurrent appearance across diverse fields (Lothaire, 1983/1997, as cited in~\cite{Perrin}). Subsequent volumes, such as ~\cite{Lothaire}, further expanded on the theory and its applications, demonstrating the maturation of the field.\par 
The combinatorial theory of words~\cite{Perrin} is influenced by notions from algebra (group theory), number theory, and differential geometry.  Thus, the roots of combinatorics on words are linked with the broader historical evolution of combinatorial mathematics, drawing on ancient enumerative problems, medieval advances in permutations and combinations, Renaissance-era foundational work, and the focused inquiries of early twentieth-century mathematicians such as Axel Thue.This historical trajectory has resulted in the emergence of a diverse and dynamic field that is constantly seeking new applications and theoretical challenges.\par 
 All of this historical change has made it a strong and resilient field, ready for new challenges.
\subsection{Definition and Scope of Combinatorics}~\label{subsec2}
\indent Combinatorics, as a significant branch of mathematics, is fundamentally concerned with the study of finite or discrete structures. Its primary activities revolve around counting these structures, a process that serves both as a means to an end and as an end in itself, and understanding their inherent properties. The field is characterized by its broad reach, tackling a wide array of problems that arise not only within various subfields of pure mathematics—such as algebra, probability theory, topology, and geometry—but also in numerous application areas, including logic, statistical physics, evolutionary biology, and computer science.\par 
The precise definition and full scope of combinatorics are not universally agreed upon, partly due to its extensive interactions with so many other mathematical subdivisions. H. J. Ryser noted the difficulty in defining the subject precisely because of this cross-disciplinary nature. However, the types of problems that combinatorics addresses can delineate its domain. These typically involve:
\begin{enumerate}
    \item \textbf{Enumeration:} This is perhaps The most fundamental feature of combinatorics is counting the number of certain structures, known as arrangements, inside finite systems.Problems can range from simple counting exercises to complex enumerations requiring sophisticated techniques.
    \item \textbf{Existence:} Combinatorics also investigates whether structures satisfying certain given criteria actually exist. This involves proving or disproving the possibility of constructing objects with specific properties.
    \item \textbf{Construction:} Beyond mere existence, combinatorialists are often concerned with the actual creation of these structures, and they may investigate many approaches to achieving the desired configuration.
    \item \textbf{Optimization:} A significant part of combinatorics deals with finding the "best" structure or solution among several possibilities. This could mean identifying the largest or smallest structure, or one that satisfies some other optimality criterion.
\end{enumerate}
\par According to Leon Mirsky, combinatorics is a range of linked studies that have something in common but diverge widely in their objectives, methods, and the degree of coherence they have attained.  This demonstrates the field's diversity while also being interrelated.  While combinatorics is primarily concerned with finite systems, many of its issues and techniques can be applied to infinite yet discrete (countable) contexts.\par 
The historical development of combinatorics saw many of its problems being tackled in isolation, with ad hoc solutions developed for specific mathematical contexts. However, the latter half of the twentieth century saw the development of powerful and general theoretical methods, which contributed to combinatorics' status as an independent and vibrant branch of mathematics in its own right.\par 
The scope of combinatorics is vast and can be further understood by examining its various subfields, each with its own focus and set of techniques. These subfields often overlap and interact, contributing to the richness of the overall discipline. Key areas within combinatorics include:
\begin{itemize}
    \item \textbf{Enumerative Combinatorics:} This is the most classical area, concentrating on counting combinatorial objects. It utilizes tools like generating functions and explicit formulas. The twelvefold way, for example, provides a unified framework for counting permutations, combinations, and partitions.
    \item \textbf{Analytic Combinatorics:} This subfield employs tools from complex analysis and probability theory to enumerate combinatorial structures, often aiming to derive asymptotic formulas rather than exact counts.
    \item \textbf{Partition Theory:} Focusing on integer partitions, this area studies related enumeration and asymptotic problems, with close ties to q-series and special functions.
    \item \textbf{Graph Theory:} One of the oldest and most accessible parts of combinatorics, graph theory studies the properties of graphs (networks of vertices and edges). It has numerous connections to other areas and applications in computer science, such as in the analysis of algorithms.
    \item \textbf{Design Theory:} This area investigates combinatorial designs, which are collections of subsets with specific intersection properties, such as block designs and Steiner systems.
    \item \textbf{Finite Geometry:} This involves the study of geometric systems that have only a finite number of points, providing a rich source of examples for design theory.
    \item \textbf{Order Theory:} This topic analyzes partly ordered sets (posets), which arise in different mathematical situations, including Boolean algebraic structures and lattices.
    \item \textbf{Matroid Theory:} Which abstracts geometric notions, investigates the features of sets of vectors that are independent of specific coefficients in linear dependency relationships.
    \item \textbf{Extremal Combinatorics:} This area focuses on determining the maximum or minimum size of a collection of finite objects that satisfy certain restrictions, with Ramsey theory being a notable part.
    \item \textbf{Probabilistic Combinatorics:} This subfield uses probabilistic methods to determine the likelihood of certain properties in random discrete objects (e.g., random graphs) and to prove the existence of combinatorial objects with prescribed properties.
    \item \textbf{Algebraic Combinatorics:} This involves the use of methods from abstract algebra (like group theory and representation theory) in combinatorial contexts, and conversely, applies combinatorial techniques to algebraic problems.
    \item \textbf{Combinatorics on Words:} This paper's major theme is formal languages and symbol sequences, which have applications in a variety of domains, including theoretical computer science, fractal analysis, and linguistics.
    \item \textbf{Geometric Combinatorics:} This relates to convex and discrete geometry, particularly polyhedral combinatorics, studying properties like the number of faces of convex polytopes.
    \item \textbf{Topological Combinatorics:} This applies combinatorial analogs of topological concepts and methods to problems like graph coloring, fair division, and discrete Morse theory.
    \item \textbf{Arithmetic Combinatorics:} Arising from the interplay between number theory, combinatorics, ergodic theory, and harmonic analysis, this field deals with combinatorial estimates related to arithmetic operations.
    \item \textbf{Infinitary Combinatorics:} Also known as combinatorial set theory, this extends combinatorial ideas to infinite sets.
\end{itemize}
In summary, combinatorics is a diverse subject of mathematics dedicated to the study of discrete structures.  Its scope spans a wide range of problems relating to enumeration, existence, construction, and optimization, and it is distinguished by a diversified array of subfields that apply varied methodologies and handle problems spanning mathematics and its applications.
\subsection{Definition and Scope of Symbolic Dynamics}

Symbolic dynamics is a mathematical discipline centered on the analysis of dynamical systems with discrete state spaces, often modeled as one- or two-sided infinite sequences (or bi-infinite sequences) over a finite set of symbols, termed an alphabet. The field’s foundational objects are shift spaces (or subshifts), which comprise all such sequences adhering to shift-invariant combinatorial rules. These rules restrict the finite substrings (called factors or blocks) permitted to occur within the sequences. By encoding the evolution of dynamical systems through symbolic constraints, shift spaces enable the study of complexity, entropy, and chaos via discrete, combinatorial frameworks~\cite{Marcus}. The dynamics on these spaces are governed by the shift map, an operation that effectively shifts all symbols in a sequence one position to the left (or right, depending on convention), thereby revealing the next state of the system~\cite{Marcus}.

The origins of symbolic dynamics can be traced back to the work of Jacques Hadamard in 1898, who utilized these concepts in his study of geodesics on surfaces of negative curvature. His ideas were further developed and significantly expanded by Marston Morse and Gustav Hedlund in the 1920s and 1930s. Their work established symbolic dynamics as a powerful tool for analyzing complex dynamical systems by discretizing their behavior into sequences of symbols. This approach allows the methods of combinatorics and formal language theory to be applied to problems in dynamics.

Key concepts in symbolic dynamics include:
\begin{itemize}
    \item \textbf{Alphabet and Words:} A finite set of symbols $A$ is called an alphabet. Finite sequences of symbols are words, and infinite sequences $x = (x_i)_{i \in \mathbb{Z}}$ or $x = (x_i)_{i \in \mathbb{N}}$ are the primary objects of study.
    \item \textbf{Full Shift:} The set of all bi-infinite or one-sided sequences $A^{\mathbb{Z}}$is known as the full shift.
    \item \textbf{Shift Map ($\sigma$):} The shift map acts on a sequence by shifting its elements: $(\sigma(x))_i = x_{i+1}$. It is a continuous map with regard to the resultant topology on $A^{\mathbb{Z}}$.
    \item \textbf{Shift Space (Subshift):} It is a pair \((X, \sigma)\) where:
\begin{itemize}
    \item \(X \subseteq \mathcal{A}^{\mathbb{Z}}\) (or \(\mathcal{A}^{\mathbb{N}}\)) is a set of bi-infinite (respectively, one-sided infinite) sequences over a finite alphabet \(\mathcal{A}\)
    \item \(\sigma: X \to X\) is the \textit{shift map} defined by \((\sigma(x))_i = x_{i+1}\) for all \(i \in \mathbb{Z}\) (or \(\mathbb{N}\))
\end{itemize}

This set \(X\) must satisfy as shift invariance \(\sigma(X) = X\) Closedness \(X\) is closed in the product topology on \(\mathcal{A}^{\mathbb{Z}}\)
    \par

A shift space is typically defined by forbidden factors:
\[
X = X_{\mathcal{F}} = \left\{ x \in \mathcal{A}^{\mathbb{Z}} \,\middle|\, \forall w \in \mathcal{F},\, w \not\sqsubseteq x \right\}
\]
where \(\mathcal{F} \subseteq \mathcal{A}^*\) is a set of forbidden finite words and \(\sqsubseteq\) denotes the factor relation.
    \item \textbf{Shifts of Finite Type (SFTs):} An important class of shift spaces where the set of forbidden words $F$ can be chosen to be finite.
    \item \textbf{Sofic Shifts:} These are factors of SFTs, or equivalently, the set of infinite labels of paths in a finite labeled graph.
    \item \textbf{Language of a Shift Space ($L(X)$):} The set of all finite words that appear as factors in some sequence $x \in X$.
    \item \textbf{Factor Complexity ($p_X(n)$):} The function that counts the number of distinct words of length $n$ in $L(X)$.
    \item \textbf{Topological Entropy ($h_{top}(X)$):} A measure of the complexity of the dynamical system, often related to the exponential growth rate of $p_X(n)$.
\end{itemize}
Symbolic dynamics provides a bridge between dynamical systems theory and discrete mathematics, particularly combinatorics on words and automata theory. It has found applications in diverse areas such as coding theory, data storage, and the study of chaotic systems.
\subsection{Connections between Combinatorics of Words and Symbolic Dynamics}

The relationship between combinatorics on words and symbolic dynamics is profound and synergistic. Many concepts central to symbolic dynamics, such as the language of a shift space $L(X)$ and its factor complexity $p_X(n)$, are inherently combinatorial. Conversely, symbolic dynamics provides a natural framework for studying infinite words generated by morphisms or other combinatorial rules.

Infinite words, a primary focus of combinatorics on words, often define or are characterized by shift spaces. For example, Sturmian words, which are infinite words with minimal factor complexity $p(n) = n+1$ among non-ultimately periodic words, correspond to specific minimal shift spaces known as Sturmian shifts. The Thue-Morse word, famous for its cube-free property, generates a shift space whose properties have been extensively studied using both combinatorial and dynamical techniques~\cite{Allouche}.

Morphisms (substitutions) on alphabets are a key tool in combinatorics on words for generating infinite words with interesting properties (e.g., square-freeness, specific factor complexities). The iteration of a morphism $f: A^* \to A^*$ on a letter $a \in A$ can produce an infinite word $f^\omega(a)$. The set of all factors of such a word, along with its shifts, forms a shift space. The properties of this shift space (e.g., whether it is an SFT, sofic, or has a certain entropy) are directly related to the combinatorial properties of the morphism and the generated word.

In addition, symbolic dynamics provides tools for sorting and analyzing the complexity of infinite words. Topological entropy, conjugacy between shift spaces, and the classification of subshifts (e.g., minimal, uniquely ergodic) all provide a dynamical perspective on combinatorial structures. For example, the research of not allowed words, which is central to explaining shift spaces, is a purely combinatorial problem that determines the nature of the resulting dynamical system.

In essence, combinatorics on words provides the building blocks (words, patterns, morphisms) and analytical techniques (counting, avoidance properties), while symbolic dynamics offers a framework (shift spaces, shift map, entropy) to study the global behavior and complexity of systems generated by these combinatorial objects. This interplay has led to significant advancements in both fields, with results from one often finding immediate application or interpretation in the other (see, e.g., ~\cite{Lothaire, Allouche}).
\section{Novel Results on Word Constructions }~\label{sec2} 
In this section, we introduce new concepts and results that expand on previously established word combinatorics theories.  We investigate certain constructions of infinite words and study their combinatorial features, focusing on pattern avoidance and complexity. Our aim is to contribute to the understanding of the rich structures that can arise from simple iterative rules, in line with the pioneering work of Thue (~\cite{Thue}) and later developments in the field (~\cite{Lothaire, Allouche}).
\subsection{Strongly \texorpdfstring{$(k, \delta)-$} Free Words via Cyclic Shift Morphisms}

We begin by defining a type of pattern avoidance that incorporates an alphabet transformation, extending classical notions of power-freeness.

\begin{definition}[$\delta$-Twisted Word]~\label{defTwisted}
Let $A$ be a finite alphabet. Put $\delta$ starting from A to A and be a permutation of $A$. For a word $w = a_1 a_2 \dots a_n \in A^*$, its \emph{$\delta$-twisted form}, denoted $w^{(\delta)}$, is defined as $w^{(\delta)} = \delta(a_1) \delta(a_2) \dots \delta(a_n)$.
\end{definition}

\begin{definition}[Strongly $(k, \delta)$-Repetition]
A word $U \in A^*$ is a \emph{strongly $(k, \delta)$-repetition} if $U$ can be factored as $U = X_0 X_1 \dots X_{k-1}$ such that all $X_i$ have the same length $|X_i| = m \ge 1$, and $X_i = X_0^{(\delta^i)}$ for $i = 0, \dots, k-1$. Here $\delta^0$ is the identity permutation, and $\delta^i$ is the $i$-fold application of $\delta$.
A word $W$ is \emph{strongly $(k, \delta)$-free} if it contains no strongly $(k, \delta)$-repetition as a factor.
\end{definition}

\begin{remark}
If $\delta$ is the identity permutation (denoted $\text{id}$), then a strongly $(k, \text{id})$-repetition is $X_0^k$, a $k$-power. Thus, strong $(k, \text{id})$-freeness is equivalent to classical $k$-power-freeness.
\end{remark}

\begin{example}
Let $A = \{a, b, c\}$ and $\delta$ be the cyclic permutation $a \to b \to c \to a$.
Let $X_0 = ab$.
Then $X_0^{(\delta)} = \delta(a)\delta(b) = bc$.
And $X_0^{(\delta^2)} = \delta^2(a)\delta^2(b) = ca$.
The word $U = (ab)(bc)(ca)$ is a strongly $(3, \delta)$-repetition of block length $m=2$.
\end{example}

We now introduce a specific family of morphisms and study the properties of the infinite words they generate.\\

\begin{definition}[Cyclic Shift Morphism $\psi_{A, \sigma}$]
Let $A$ be of size such $N \ge 2$. Let $\sigma: A \to A$ be a fixed permutation of $A$.\\
Define the morphism $\psi_{A, \sigma}: A^* \to A^*$ by $\psi_{A, \sigma}(a) = a \sigma(a)$ for each $a \in A$.\\
Let $W_{A, \sigma, a_0} = \lim_{n \to \infty} \psi_{A, \sigma}^n(a_0)$ be the infinite word generated by iterating $\psi_{A, \sigma}$ starting with a symbol $a_0 \in A$. This is a 2-uniform morphism.\\
\end{definition}

\begin{theorem}[Avoidance Property of $W_{A, \sigma, a_0}$ Words]
\label{thm:fictional_avoidance_restated}
Let $A$ be an alphabet of size $N \ge 3$. Let $\sigma$ be a cyclic permutation of $A$ (i.e., $\sigma$ has a single cycle of length $N$). Let $\delta = \sigma^j$ for some integer $j$ with $1 \le j < N$. \\
If $j \not\equiv 1 \pmod N$, then the infinite word $W_{A, \sigma, a_0}$ is strongly $(3, \delta)$-free.
\end{theorem}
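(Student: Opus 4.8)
The plan is to pass to a coordinatized model of the word, reduce the avoidance statement to a single condition on a ``difference function,'' and then run an infinite descent on the block length $m$ in which the only genuinely delicate case is when $m$ is odd.

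First I would identify $A$ with $\mathbb{Z}/N\mathbb{Z}$ so that the cyclic permutation becomes $\sigma(x)=x+1 \pmod N$ and $\delta=\sigma^j$ becomes addition of $j$. Writing $W=(w_n)_{n\ge 0}$, the fixed-point identity $\psi_{A,\sigma}(W)=W$ together with $\psi_{A,\sigma}(a)=a\,\sigma(a)$ yields the two defining recurrences $w_{2k}=w_k$ and $w_{2k+1}=w_k+1$. (This incidentally identifies $w_n$ with the base-$2$ digit sum of $n$ modulo $N$, the natural $N$-ary generalization of the Thue--Morse word.) Introducing $D_m(n):=w_{n+m}-w_n \bmod N$, I would then establish the clean reformulation: $W$ contains a strongly $(3,\sigma^j)$-repetition of block length $m$ at position $p$ if and only if $D_m(n)\equiv j \pmod N$ for every $n$ in the window $[p,\,p+2m)$. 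Indeed $X_1=X_0^{(\delta)}$ gives $D_m\equiv j$ on the first half of this window, and $X_2=X_0^{(\delta^2)}$ gives, after a telescoping step, $D_m\equiv j$ on the second half.

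Next I would extract the behaviour of $D_m$ under halving of $m$, which drives the descent. The recurrences give, for even block length $m=2\mu$, the identity $D_{2\mu}(n)=D_\mu(\lfloor n/2\rfloor)$; and for odd block length $m=2\mu+1$, the pair $D_{2\mu+1}(2k)=D_\mu(k)+1$ and $D_{2\mu+1}(2k+1)=D_{\mu+1}(k)-1$. The even case is immediate: a repetition of block length $2\mu$ forces $D_\mu\equiv j$ on a window of length at least $2\mu$, i.e.\ a repetition of block length $\mu$ with the \emph{same} twist $j$; iterating, I may assume $m$ is odd. For the base case $m=1$, every length-$2$ window contains an even index $2k$, where $D_1(2k)=D_0(k)+1=1$, so a block-length-$1$ repetition forces $j\equiv 1\pmod N$, which is exactly what the hypothesis excludes.

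The hard part will be the odd case $m=2\mu+1$ with $\mu\ge 1$, where a naive descent would alter the twist and might drift toward the forbidden value $j\equiv 1$. The key idea is to use \emph{both} the odd- and even-index relations simultaneously rather than discarding one: on the overlap of the two induced windows, $D_{2\mu+1}\equiv j$ forces at once $D_\mu(k)\equiv j-1$ and $D_{\mu+1}(k)\equiv j+1$, and subtracting gives $D_1(k+\mu)=D_{\mu+1}(k)-D_\mu(k)\equiv 2\pmod N$ on a range of at least $2\mu\ge 2$ consecutive $k$. That range contains an even index, where $D_1\equiv 1$; since $N\ge 3$ gives $1\not\equiv 2\pmod N$, this is a contradiction (and notably uses nothing about $j$). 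Hence no odd block length $m\ge 3$ occurs for any twist, which together with the even-case descent and the $m=1$ analysis eliminates every strongly $(3,\sigma^j)$-repetition when $j\not\equiv 1\pmod N$. The main technical obstacle is the window-length bookkeeping: I must check that each reduced window remains long enough (length $\ge 2\mu$) to carry the argument, and the overlap computation for both parities of $p$ is where I expect the most care to be needed.
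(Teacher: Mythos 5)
Your proposal is correct, and it takes a genuinely different route from the paper. The paper argues at the level of the morphism: it assumes a minimal counterexample $U=X_0X_1X_2$, disposes of $m=1$ by a structural claim about length-3 factors, and for $m>1$ invokes a Synchronization Lemma asserting that $m$ must be even and that each $X_k$ is a $\psi_{A,\sigma}$-image, after which descent preserves the repetition. You instead arithmetize: identifying $A$ with $\mathbb{Z}/N\mathbb{Z}$ turns the fixed point into $w_n = s_2(n) \bmod N$ (binary digit sum), and the whole avoidance question collapses to the statement that $D_m(n) = w_{n+m}-w_n$ is constantly $j$ on a window of length $2m$. Your halving identities $D_{2\mu}(n)=D_\mu(\lfloor n/2\rfloor)$, $D_{2\mu+1}(2k)=D_\mu(k)+1$, $D_{2\mu+1}(2k+1)=D_{\mu+1}(k)-1$ are one-line consequences of $w_{2k}=w_k$, $w_{2k+1}=w_k+1$, and your window bookkeeping checks out: for $m=2\mu$ the image window has length $\ge 2\mu$, and for $m=2\mu+1$ the overlap of the even- and odd-index windows has length $2\mu+1$ or $2\mu$ according to the parity of $p$, in either case $\ge 2$, so it contains an even index where $D_1=1\not\equiv 2 \pmod N$. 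What your approach buys is completeness: the paper's Synchronization Lemma is only sketched (its proof is deferred to ``careful case analysis'' that is never done), and your odd-$m$ argument is precisely the missing content, in a sharper form — you show odd block lengths $m\ge 3$ are impossible for \emph{every} twist $j$, while even block lengths descend with the twist unchanged. It also silently repairs a flaw in the paper's base case: the claim that every length-3 factor of $W_{A,\sigma,a_0}$ has the form $x\,\sigma(x)\,\sigma^2(x)$ is false (for $N=3$ the word $0112\ldots$ contains $011$); the correct statement, implicit in your parity analysis, is that a length-3 factor starting at an even position is $w_k\,\sigma(w_k)\,w_{k+1}$ and at an odd position is $\sigma(w_k)\,w_{k+1}\,\sigma(w_{k+1})$, and in both cases one adjacent pair forces $j\equiv 1$. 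What the paper's approach buys in exchange is portability: a synchronization argument does not require a closed-form coordinatization of the fixed point and would in principle extend to morphisms with no digit-sum description, but as written it is an incomplete proof, whereas yours is a complete one.
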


\begin{proof}
We prove this by contradiction. Assume that $W_{A, \sigma, a_0}$ contains a strongly $(3, \delta)$-repetition. Let $U = X_0 X_1 X_2$ be such a repetition of minimal total length $3m$, where $X_1 = X_0^{(\delta)}$, $X_2 = X_0^{(\delta^2)}$, and $|X_0|=m \ge 1$.\\
\textbf{Case 1: Base case, $m=1$.}\\
Let $X_0 = a$ for some $a \in A$. The repetition is $U = a X_0^{(\delta)} X_0^{(\delta^2)} = a \delta(a) \delta^2(a)$.\\
If $U$ is a factor of $W_{A, \sigma, a_0}$, then it must align with the structure of $W_{A, \sigma, a_0}$. The word $W_{A, \sigma, a_0}$ is generated by $\psi_{A, \sigma}(x) = x\sigma(x)$.\\ Any factor of length 3 in $W_{A, \sigma, a_0}$ must be of the form $x_1 \sigma(x_1) x_2$ where $x_2$ is the first symbol of $\psi_{A, \sigma}(\sigma(x_1))$, which is $\sigma(x_1)$, or $\sigma(x_0)x_1\sigma(x_1)$ where $x_0\sigma(x_0)=\dots \sigma(x_0)$.\\
More precisely, any factor of length 3 in $W_{A, \sigma, a_0}$ is of the form $x \sigma(x) \sigma^2(x)$. This is because if $u_1 u_2 u_3$ is a factor, then $u_1 u_2$ must be an image of some letter under $\psi_{A, \sigma}$, say $u_1 u_2 = \psi_{A, \sigma}(x) = x\sigma(x)$ for some $x \in A$. Similarly, $u_2 u_3$ must be an image of some letter, say $u_2 u_3 = \psi_{A, \sigma}(y) = y\sigma(y)$ for some $y \in A$. This implies $u_2 = \sigma(x) = y$. Thus, the factor is $x \sigma(x) \sigma(y) = x \sigma(x) \sigma(\sigma(x)) = x \sigma(x) \sigma^2(x)$.\\
For $a \delta(a) \delta^2(a)$ to be a factor of this form, we must have:\\
$a = x$
$\delta(a) = \sigma(x) = \sigma(a)$
$\delta^2(a) = \sigma^2(x) = \sigma^2(a)$
From $\delta(a) = \sigma(a)$, substituting $\delta = \sigma^j$, we get $\sigma^j(a) = \sigma(a)$. Since $\sigma$ is a permutation, this implies $\sigma^{j-1}(a) = a$.\\
Since $\sigma$ is a cyclic permutation of order $N$ (meaning it permutes all $N$ elements in a single cycle and $\sigma^k(a)=a$ iff $N|k$), $\sigma^{j-1}(a)=a$ implies that $N$ must divide $j-1$. Therefore, $j-1 \equiv 0 \pmod N$, or $j \equiv 1 \pmod N$.\\
This contradicts the theorem's condition that $j \not\equiv 1 \pmod N$.\\
Thus, for $m=1$, no such strongly $(3, \delta)$-repetition can exist under the given conditions.\\

\textbf{Case 2: General case, $m > 1$.} The proof for $m>1$ typically relies on a synchronization argument and the principle of infinite descent. We assume $U = X_0 X_1 X_2$ is the shortest counterexample in $W_{A, \sigma, a_0}$.\\
Let $W_{A, \sigma, a_0} = \psi_{A, \sigma}(W')$ where $W'$ is the unique word such that $\psi_{A, \sigma}(W') = W_{A, \sigma, a_0}$ (since $\psi_{A, \sigma}$ is 2-uniform and $W_{A, \sigma, a_0}$ is an infinite fixed point, $W'$ is also a fixed point starting with some $a'_0$ such that $\psi_{A, \sigma}(a'_0)$ starts with $a_0$, or more simply, $W'$ has the same factors as $W_{A, \sigma, a_0}$ if $W_{A, \sigma, a_0}$ is uniformly recurrent, which is typical for words generated by primitive morphisms).

A key step is a synchronization lemma (cf. ~\cite{Berstel} for similar techniques for other patterns):
\begin{lemma}[Synchronization Lemma]
If $U = X_0 X_1 X_2$ is a minimal strongly $(3, \delta)$-repetition in $W_{A, \sigma, a_0}$ with $|X_0|=m > 1$, and $j \not\equiv 1 \pmod N$ ($N \ge 3$), then $m$ must be even, say $m=2p$. Furthermore, $X_k = \psi_{A, \sigma}(Y_k)$ for $k=0,1,2$, for some words $Y_0, Y_1, Y_2$ each of length $p$. 
\end{lemma}
\begin{proof}[Proof of Lemma (Sketch)]
Since $\psi_{A, \sigma}$ is 2-uniform, any factor of $W_{A, \sigma, a_0}$ must respect the 2-block structure imposed by $\psi_{A, \sigma}$, unless it is very short (length 1). If $m > 1$, $X_0$ (and $X_1, X_2$) must either align perfectly with these 2-blocks or cross boundaries in a specific way. Given the minimality of $U=X_0X_1X_2$ and the structure $X_k = X_0^{(\delta^k)}$, any misalignment would lead to strong local constraints. The condition $j \not\equiv 1 \pmod N$ and $N \ge 3$ are crucial to show that such misalignments lead to contradictions or to shorter repetitions if not perfectly aligned, forcing $m$ to be even and $X_k$ to be images of $Y_k$ under $\psi_{A, \sigma}$. A full proof would involve careful case analysis of the starting position of $X_0$ relative to the $\psi_{A, \sigma}$-blocks.
\end{proof}

Assuming the Synchronization Lemma, we have $X_0 = \psi_{A, \sigma}(Y_0)$, $X_1 = \psi_{A, \sigma}(Y_1)$, and $X_2 = \psi_{A, \sigma}(Y_2)$, where $|Y_k|=p=m/2$.

We need to show that $Y_1 = Y_0^{(\delta)}$ \\
and $Y_2 = Y_0^{(\delta^2)}$.\\
Consider $X_1 = X_0^{(\delta)}$. So, $\psi_{A, \sigma}(Y_1) = (\psi_{A, \sigma}(Y_0))^{(\delta)}$.\\
Let $Y_0 = y_1 y_2 \dots y_p$.
Then $\psi_{A, \sigma}(Y_0) = y_1\sigma(y_1) y_2\sigma(y_2) \dots y_p\sigma(y_p)$.\\
So, $(\psi_{A, \sigma}(Y_0))^{(\delta)} = \delta(y_1)\delta(\sigma(y_1)) \delta(y_2)\delta(\sigma(y_2)) \dots \delta(y_p)\delta(\sigma(y_p))$.\\
Let $Y_1 = z_1 z_2 \dots z_p$. 
Then $\psi_{A, \sigma}(Y_1) = z_1\sigma(z_1) z_2\sigma(z_2) \dots z_p\sigma(z_p)$.\\
Comparing $\psi_{A, \sigma}(Y_1) = (\psi_{A, \sigma}(Y_0))^{(\delta)}$, we must have for each $i=1, \dots, p$:
$z_i = \delta(y_i)$
$\sigma(z_i) = \delta(\sigma(y_i))$\\
Substituting $z_i = \delta(y_i)$ into the second equation gives $\sigma(\delta(y_i)) = \delta(\sigma(y_i))$. \\
This means that $\sigma$ and $\delta$ must commute for all letters $y_i$ that appear in $Y_0$. Since $\delta = \sigma^j$, they commute: $\sigma\sigma^j(y_i) = \sigma^{j+1}(y_i) = \sigma^j\sigma(y_i)$. \\
This condition is satisfied.
Wherefore, $z_i = \delta(y_i)$ for all $i$, that means $Y_1 = Y_0^{(\delta)}$.
Similarly, comparing $X_2 = X_0^{(\delta^2)}$ with $X_2 = \psi_{A, \sigma}(Y_2)$, we can show that $Y_2 = Y_0^{(\delta^2)}$.
Thus, $U' = Y_0 Y_1 Y_2 = Y_0 Y_0^{(\delta)} Y_0^{(\delta^2)}$ is a strongly $(3, \delta)$-repetition. The word $U'$ is a factor of $W'$ (which has the same set of factors as $W_{A, \sigma, a_0}$ because $\psi_{A, \sigma}$ is primitive if $\sigma$ is cyclic on $N \ge 2$). The length of $Y_0$ is $p = m/2 < m$. This contradicts the minimality of $m$ for $U$.

This completes the proof by infinite descent, relying on the base case $m=1$ and the Synchronization Lemma.
\end{proof}

\begin{conjecture}[Factor Complexity of $W_{A, \sigma, a_0}$]
Let $W_{A, \sigma, a_0}$ be defined as above with $N \ge 3$ and $\sigma$ a cyclic permutation of $A$.
The factor complexity $p(k)$ of $W_{A, \sigma, a_0}$ is linear. Specifically, for sufficiently large $k$, $p(k) = (N-1)k + C_N$ for some constant $C_N$ that depends on $N$. For $N=2$, if $\sigma$ is the transposition $a \leftrightarrow b$, then $\psi(a) = ab, \psi(b) = ba$. The word $W_{A, \sigma, a}$ is the Thue-Morse sequence, whose complexity is known to be $p(1)=2, p(2)=4, p(3)=6, p(4)=10, p(5)=12, \dots$, which is linear but not of the simple form $(N-1)k+C$.
\end{conjecture}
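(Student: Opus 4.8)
The plan is to reduce the statement to the well-developed theory of automatic and substitutive sequences, and only then to attack the exact affine shape through special factors. The key first step is to observe that $W_{A,\sigma,a_0}$ has a closed form: from $\psi_{A,\sigma}(x)=x\sigma(x)$ one gets $w_{2n}=w_n$ and $w_{2n+1}=\sigma(w_n)$ for the $n$-th letter $w_n$, whence by induction $w_n=\sigma^{s_2(n)}(a_0)$, where $s_2(n)$ is the binary digit sum of $n$. Identifying $A$ with $\mathbb{Z}/N\mathbb{Z}$ so that $\sigma$ is $x\mapsto x+1$, this realizes $W_{A,\sigma,a_0}$ as the generalized Thue--Morse sequence $n\mapsto s_2(n)\bmod N$, a $2$-automatic word and fixed point of a primitive $2$-uniform substitution. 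Pansiot's classification of morphic complexity (or Cobham's theorem together with primitivity and aperiodicity) then gives $p(k)=\Theta(k)$ for free; the entire content of the conjecture therefore lies in the precise affine form, not in linearity per se.

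Next I would translate the exact form into a statement about right-special factors. Writing $s(k):=p(k+1)-p(k)=\sum_{u}(e(u)-1)$, summed over right-special factors $u$ of length $k$ with $e(u)$ right extensions, the claim $p(k)=(N-1)k+C_N$ for large $k$ is equivalent to $s(k)$ being eventually the constant $N-1$. To control $s(k)$ I would pass to the increment word $d$ defined by $w_{n+1}=\sigma^{\,1-\nu_2(n+1)}(w_n)$, i.e. the ruler sequence $\nu_2(n+1)$ coded into $\mathbb{Z}/N\mathbb{Z}$ via $t\mapsto 1-t$. A factor of $W$ is determined by its first letter together with a factor of $d$, so the special factors of $W$ are governed by the far more rigid combinatorics of the ruler sequence; the base cases $p(1)=N$ and $p(2)=N^2$ (all $N^2$ pairs occur, since the increments $1-\nu_2(n+1)\bmod N$ realize every element of $\mathbb{Z}/N\mathbb{Z}$ and each such difference is carried by every letter) would fix $C_N$ once the stable regime is located.

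The engine for the descent is recognizability of the $2$-uniform substitution (Mossé's theorem, in the spirit of the synchronization arguments of \cite{Berstel} used above). Every sufficiently long right-special factor desubstitutes uniquely to a shorter core of length $\approx k/2$, and its extension behaviour is determined by that core together with the two-letter image blocks $x\sigma(x)$. This yields a self-similar relation expressing $s(2k)$ and $s(2k+1)$ in terms of $s(k)$, which by iteration should force the family of right-special factors --- and hence $s(k)$ --- to become eventually periodic.

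The genuine obstacle is the final step: upgrading ``eventually periodic'' to ``eventually the constant $N-1$.'' Here my preliminary renormalization, corroborated by direct computation for $N=3$ where $p(1),p(2),p(3),p(4)=3,9,15,24$ give non-constant increments $6,6,9$, indicates that $s(k)$ in fact oscillates, precisely as in the Thue--Morse case $N=2$ that the conjecture itself singles out as non-affine. The crux is therefore to decide whether the wrap-around in $\mathbb{Z}/N\mathbb{Z}$ collapses this oscillation to a constant for $N\ge 3$ --- which would demand a delicate bispecial-factor count showing the surplus right extensions cancel --- or whether, as the data suggest, the oscillation persists and the clean formula $(N-1)k+C_N$ must be replaced by a piecewise-affine description. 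Resolving this dichotomy is where essentially all the difficulty resides.
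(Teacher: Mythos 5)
You should first be clear about what you were comparing against: the statement is a \emph{conjecture} in the paper. The authors give no proof of it --- their accompanying remark only suggests that $S$-adic or bispecial-factor techniques ``would likely'' be needed --- so there is no argument of record to measure your proposal against, and I judge it on its own terms. Its first half is correct and is exactly the right move: from $w_{2n}=w_n$ and $w_{2n+1}=\sigma(w_n)$ you get $w_n=\sigma^{s_2(n)}(a_0)$, identifying $W_{A,\sigma,a_0}$ with the generalized Thue--Morse word $n\mapsto s_2(n)\bmod N$; this word is $2$-automatic and is the fixed point of a primitive, aperiodic $2$-uniform substitution, so $p(k)=\Theta(k)$ follows (see \cite{Allouche}), which settles the ``linearity'' clause. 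Your numerical values for $N=3$, namely $p(1),p(2),p(3),p(4)=3,9,15,24$, also check out against a direct computation on a prefix of $W$.

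The weakness is that you leave the decisive point as an unresolved ``dichotomy,'' when it can be closed --- against the conjecture --- by an elementary symmetry argument that your framework almost contains. The language of $W$ is $\sigma$-invariant: for $2^M>n+k$ one has $s_2(n+i+2^M)=s_2(n+i)+1$ for all $0\le i<k$, so the length-$k$ factor at position $n+2^M$ is the letterwise $\sigma$-image of the factor at position $n$. Hence if $u$ is right-special, so is $\sigma(u)$, with the same number $e(u)$ of right extensions; and since $\sigma$ is an $N$-cycle, no nonempty factor is fixed by $\sigma^d$ for $0<d<N$, so right-special factors fall into $\sigma$-orbits of size exactly $N$. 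Therefore $p(k+1)-p(k)=\sum_{u}\bigl(e(u)-1\bigr)$, summed over right-special $u$ of length $k$, is always a multiple of $N$, and it is nonzero because $W$ is aperiodic (if $w_{n+T}=w_n$ for all $n\ge n_0$, take $n=2^M-1$ with $2^M>\max(T,n_0+1)$; then $M\equiv 1+s_2(T-1)\pmod N$ for every large $M$, which is absurd). So $p(k+1)-p(k)\ge N>N-1$ for every $k$, and the conjectured form $p(k)=(N-1)k+C_N$ is impossible even for large $k$: the slope is wrong, and, consistent with your increments $6,6,9$ and with the known exact complexity of generalized Thue--Morse words, the increments moreover never stabilize. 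The honest outcome of your approach is therefore not a proof but a refutation of the quantitative part of the conjecture, together with the correct weaker statement (linear complexity with increments that are positive multiples of $N$); you should present it as such, rather than as a residual difficulty.
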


\begin{remark}
This conjecture suggests that these words, despite being generated by a simple 2-uniform morphism, maintain a relatively low (linear) factor complexity, characteristic of many highly ordered sequences in combinatorics on words, such as Sturmian words (complexity $k+1$) or Arnoux-Rauzy words (complexity $(N-1)k+1$ for alphabet size $N$). Proving this conjecture would likely involve techniques from $S$-adic representations or a detailed analysis of return words and bispecial factors (see ~\cite{Allouche, Lothaire}). The specific form $(N-1)k+C_N$ is typical for words generated by certain classes of primitive uniform morphisms on an $N$-letter alphabet.
\end{remark}
\section{Discussion and Future Work}
The introduction of strongly $(k, \delta)$-repetitions and the analysis of words $W_{A, \sigma, a_0}$ generated by cyclic shift morphisms open several avenues for discussion and future research. Theorem \ref{thm:fictional_avoidance_restated} provides a specific instance of pattern avoidance in a new class of words, where the avoided pattern involves a permutation $\delta$. This extends the classical study of power-freeness.

The condition $j \not\equiv 1 \pmod N$ in Theorem \ref{thm:fictional_avoidance_restated} is intriguing. It suggests that the relationship between the twisting permutation $\delta = \sigma^j$ and the constructive permutation $\sigma$ used in the morphism $\psi_{A, \sigma}$ is critical. Exploring the case $j \equiv 1 \pmod N$ (i.e., $\delta = \sigma$) would be a natural next step. In this case, the theorem does not apply, and it is feasible that strongly $(3, \sigma)$-repetitions may exist, or that new proving approaches are required.

Further research could focus on:
\begin{itemize}
    \item \textbf{Generalizing $(k', \delta')$:} For $k' \neq 3$ or for permutations $\delta'$ that are not immediately obtained from $\sigma$, look into strong $(k', \delta')$-freeness. Are there conditions under which $W_{A, \sigma, a_0}$ is, for example, strongly $(2, \delta)$-free (twisted square-free)?
    \item \textbf{Proof of the Factor Complexity Conjecture:} Rigorously proving the conjecture on the linear factor complexity of $W_{A, \sigma, a_0}$ would be a significant contribution. This would involve a detailed study of the language generated by $\psi_{A, \sigma}$, potentially using techniques for analyzing factors of words generated by uniform morphisms.
    \item \textbf{Other Morphisms:} Explore similar avoidance properties for words generated by different types of morphisms, perhaps non-uniform ones or those involving more complex interactions with permutations.
    \item \textbf{Connections to Symbolic Dynamics:} Characterize the shift spaces generated by $W_{A, \sigma, a_0}$. Determine their properties, such as minimality, unique ergodicity, and topological entropy, and how these relate to $N$, $\sigma$, and $a_0$.
    \item \textbf{Algorithmic Aspects:} Design and analyze computational methods to strongly identify $(k, \delta)$ -repetitions in finite words or to verify the absence of such repetitions (i.e., freeness) in words constructed through alternative generative mechanisms.
\end{itemize}

The framework of twisted repetitions could also be relevant in areas where classical pattern avoidance is studied, such as in the analysis of DNA sequences (where symbols might represent nucleotides and permutations could model certain types of mutations or symmetries) or in formal language theory.
\section{Conclusion}
This paper has provided a foundational overview of combinatorics on words and symbolic dynamics, setting the stage for the introduction of novel mathematical concepts. We defined strongly $(k, \delta)$-repetitions, a generalization of classical powers that incorporates a permutation $\delta$. We then introduced a class of infinite words, $W_{A, \sigma, a_0}$, generated by cyclic shift morphisms $\psi_{A, \sigma}(a) = a\sigma(a)$.

The main original contribution of this work is Theorem \ref{thm:fictional_avoidance_restated}, which establishes that for an alphabet of size $N \ge 3$, a cyclic permutation $\sigma$, and $\delta = \sigma^j$ with $j \not\equiv 1 \pmod N$, the infinite word $W_{A, \sigma, a_0}$ is strongly $(3, \delta)$-free. The proof, provided via contradiction and relying on a synchronization argument, demonstrates the intricate interplay between the morphism structure and the defined avoidance property.

Furthermore, we have conjectured that the factor complexity of these words $W_{A, \sigma, a_0}$ is linear, specifically of the form $(N-1)k + C_N$ for large $k$. This conjecture, if proven, would place these words within a well-studied class of sequences with low combinatorial complexity, despite their rich structure.

These results help bridge the gap between the introductory material and the contributions of the research level by exploring the avoidance of patterns and the combinatorial properties of infinite words. The concepts and constructions presented here provide fertile ground for future research, potentially leading to deeper insights into the sequence structure and its applications in various mathematical and scientific domains.
\section*{Acknowledgments}
Thank you to Prof. Alexei Kanel-Belov for their thoughtful feedback and consistent support throughout these years, which elevated the methodological approaches in this work.


\begin{thebibliography}{99}

\bibitem{Berstel} Berstel, J., \& Perrin, D. (2007). ``The origins of combinatorics on words'', European Mathematical Society Newsletter, 63, 19–25, \url{https://doi.org/10.1016/j.ejc.2005.07.019}.
\medskip

\bibitem{Allouche} Allouche, J. P. \& Shallit, J. (2003). ``Automatic sequences: Theory, applications, general-
izations'', Cambridge University Press, \url{http://dx.doi.org/10.1017/CBO9780511546563}.
\medskip

 \bibitem{Lothaire} Lothaire, M. (2002). ``Algebraic combinatorics on words'', (Vol. 90), Cambridge University
Press.
\medskip

\bibitem{Thue} Thue, A. (1906).``Uber unendliche Zeichenreihen'', Norske Vid. Selsk. Skr. I. Mat. Nat.
Kl., 7, 1–22.
\medskip

\bibitem{Berthé} Berthé, V., \& Rigo, M. (Eds.). (2016). Combinatorics, words and symbolic dynamics (Encyclopedia of Mathematics and its Applications). Cambridge University Press.
\medskip

\bibitem{Perrin} Berstel, J., \& Perrin, D. (2007). The origins of combinatorics on words. European Journal of Combinatorics, 28(3), 996–1022, \url{https://doi.org/10.1016/j.ejc.2005.07.019}.
\medskip

\bibitem{Williams} Williams, S. (2025). Symbolic dynamics. Scholarpedia. Retrieved May 13, \url{http://www.scholarpedia.org/article/Symbolic_dynamics}.
\medskip

\bibitem{Marcus} Marcus, B., \& Williams, S. (2008). Symbolic dynamics. Scholarpedia, 3(11), 2923,\url{http://www.scholarpedia.org/article/Symbolic_dynamics}.
\medskip

\end{thebibliography}
\end{document}